\documentclass[12pt]{article}

\usepackage[a4paper, margin=2cm]{geometry}
\usepackage[pagewise]{lineno}
\usepackage[pdftex]{graphicx}
\usepackage{graphicx}
\usepackage{amssymb}
\usepackage{amsmath,amsthm}
\usepackage{epstopdf}
\usepackage{subfigure}
\usepackage{tikz}
\usepackage{tkz-graph}
\usepackage{mathtools}
\usepackage{csquotes}
\usepackage[bb=boondox]{mathalfa} 
\usepackage{lineno}


\newtheorem{theorem}{Theorem}[section]
\newtheorem{corollary}[theorem]{Corollary}
\newtheorem{lemma}[theorem]{Lemma}
\newtheorem{proposition}[theorem]{Proposition}

\newtheorem{definition}{Definition}
\newtheorem{example}{Example}

\newtheorem{remark}[theorem]{Remark}

\def\qed{\vbox{\hrule
  \hbox{\vrule\hbox to 5pt{\vbox to 8pt{\vfil}\hfil}\vrule}\hrule}}

\def\endproof{\unskip \nobreak \hskip0pt plus 1fill \qquad \qed \par \vspace{0.15cm}}

\bibliographystyle{plain}

\begin{document}
\title{On the Eigenvectors of Generalized Circulant Matrices}

\author{
 Enide Andrade\footnote{Center for Research and Development in Mathematics and Applications,
 Department of Mathematics,  University of Aveiro, 3810-193, Portugal, {\tt enide@ua.pt}} \,
 Dante Carrasco-Olivera\footnote{Grupo de Investigaci\'on en Sistemas Din\'amicos y Aplicaciones (GISDA),
Departamento de Matem\'atica, Universidad del B\'{\i}o-B\'{\i}o, Avda. Collao 1202, Concepci\'on, Chile, {\tt dcarrasc@ubiobio.cl}} \,
 Cristina Manzaneda\footnote{Departamento de Matem\'{a}ticas, Facultad de Ciencias. Universidad Cat\'{o}lica del Norte. Av. Angamos 0610 Antofagasta, Chile, {\tt cmanzaneda@ucn.cl} Corresponding author.}
 }

 \date{May 15, 2023}

\maketitle

\begin{abstract}
In this paper, closed formulas for the eigenvectors of a particular class of matrices generated by generalized permutation matrices, named generalized circulant matrices, are presented. 

\end{abstract}

\noindent {\bf Key words.}  circulant matrix; permutation matrix; generalized circulant matrix; eigenvector.

\noindent
	{\bf AMS subject classifications.} 15A18; 15A29.

\section{Introduction}

In \cite{Mourad}, Kaddoura and Mourad, in order to widen the scope of the class of circulant matrices, (see \cite{Davis}), constructed circulant-like matrices that were called generalized weighted circulant matrices. 
These matrices form a class of matrices generated by generalized permutation matrices corresponding to a subgroup of some permutation group. The characteristic polynomials, eigenvalues and eigenvectors of the generalized permutation matrices corresponding to a family of permutations were described. Additionally, the eigenvalues of the weighted circulant matrices were given however, its eigenvectors were not studied. Having these results as motivation, we present, in some cases, explicit formulas for the eigenvectors of the generalized weighted circulant matrices. In this work, they are simply called generalized circulant matrices.

\medskip
\noindent {\em Notation}:  
$\mathbb{C}$ is the field of complex numbers 
and the imaginary unit is denoted by \rm{i}. Moreover, $\mathbb{N}$ represents the set of natural  numbers. The identity matrix of order $m$ is denoted by $I_{m},$ and $\mbox{\rm{diag}}( a_{11}, \ldots, a_{mm})$ represents the diagonal matrix with diagonal entries $a_{11}, a_{22}, \ldots, a_{mm}$. For any square matrix $M$, $\sigma(M)$ is its spectrum and $M^{-1}$ is its inverse. We denote by $\textbf{e}_{i}$ the $i$-th column of the identity matrix. If $M$ is any matrix, $M^{T}$ is its transpose. The symbol $\bigoplus$ represents the direct sum of matrices and, for $u=[u_1,\dots,u_m]^{T}$ and $v=[v_1,\dots,v_m]^{T}$ the Hadamard product of $u$ and $v$ is denoted by $u\odot v=[u_1v_1,\dots,u_mv_m]^T$. Moreover, $\mbox{prod}\, u=\prod_{i=1}^{m} u_i,$ and for $j\in \mathbb{N}$, $\mbox{\rm{prod}}_{j}(i)= \prod_{\ell=0}^{j-1} u_{\pi^{\ell}_{s}(i)}.$ 
Additionally, $F=(\omega^{i(j-1)})$, $1\leq i,j\leq m,$
is the discrete Fourier transform, where $i(j-1) \equiv r (\mbox{\rm{mod}}\, m),$ with $r= 0, 1, \ldots, m-1$ and $\omega=\mbox{\rm{exp}}(\frac{2\pi  \rm{i}}{m}).$ 
Also, for $a,b \in \mathbb{N}$, \rm{gcd}$(a,b)$ denotes the greatest common divisor between $a,b.$ The symmetric group of order $m$ is denoted by $S_{m},$ and the order of a permutation $\pi \in S_{m}$ is $O(\pi_{s}).$ Additionally, $\pi^{k} = \pi \circ \cdots \circ \pi.$\\ 
\\
\noindent We now present some definitions and results from \cite{Mourad} that will be used in the sequel.
Let $m \in \mathbb{N}$ and $\pi \in S_{m}$.
Each element $\pi \in S_{m}$ corresponds to a permutation matrix $P_{\pi}= (p_{i,j}),$ where $p_{i,j}=1$ if $j= \pi(i)$ and zero otherwise.
A square matrix having in each row and column only one non-zero element is called a \textit{generalized permutation matrix.}

\noindent It was stated in \cite{Mourad} that an $m\times m$ matrix $P(u, \pi)$ is a generalized permutation matrix if and only if
\begin{equation} \label{matrixU}
P(u, \pi)= D_{u} P_{\pi}, \pi \in S_{m},  
\end{equation}
where $u=\left[u_{1}, \ldots, u_{m} \right]^{T} \in \mathbb{C}^{m}$ and 
$D_{u}= \mbox{\rm{diag}}(u_{1}, \ldots, u_{m}).$ 

\noindent For $m \in \mathbb{N}$, let
$ R_{m} = \{0, 1, \ldots, m-1 \} \mbox{\, and \, } R_{m}^{\star}= R_{m}\backslash \{0 \}.$
Moreover, denote by $\mathcal{P}(R_{m})$ the group of permutations of $R_{m}$ and define
$$\Gamma_{m} = \{\pi_{s} \in \mathcal{P}(R_{m}), s \in R_{m}\},$$
where $\pi_{s}: R_{m} \rightarrow R_{m}$ and
\begin{equation} \label{pi-s}
 \pi_{s} (x) = (x+s) (\mbox{\rm{mod}}\, m).
\end{equation}
Throughout this paper we assume that $\pi_s$ is defined as in (\ref{pi-s}) and $\pi_0$ is the identity of the group $\Gamma_{m}.$ Moreover, if $k\in \mathbb{N},$ $\pi_{s}^{k}= \pi_{ks}.$

The following remark shows that for $k\in \mathbb{N}$, the matrix $P(u,\pi_s)^{k}$ is also a generalized permutation matrix.\\
 
\begin{remark}  {\rm \cite{Mourad} \label{remark}  Let $P(u, \pi_{s})$ be a generalized permutation matrix where $s \in R_{m}^{\star}$. If $O(\pi_{s})=m$, from
\cite[Corollary 1.12]{Mourad} we have $P(u, \pi_{s})^{m}= (\mbox{prod}\,u) I_{m}$. 
Additionally, from \cite[Corollary 1.3]{Mourad}, if 
$k \in \mathbb{N}, P(u, \pi_{s})^{k} = P(v_{k}, \pi_{s}^{k}),$ and
\begin{equation}\label{v}
v_k = u\odot \bigodot_{i=1}^{k-1} \pi_{s}^{i}(u).
\end{equation}

\noindent Then, from \cite[Corollary 1.13]{Mourad}, if $k \geq m$ and $k= qm+r,$ for some $q\in \mathbb{N}$, such that $0\leq r< m$, 
$$P(u, \pi_{s})^{k}= (\mbox{prod}\,u)^{q}P(v_r, \pi^r_{s}).$$ }
\end{remark}

\begin{definition} {\rm \cite{Mourad} }\label{Mouradref} 
A generalized circulant matrix corresponding to $P(u, \pi_{s})$ is
\begin{equation} \label{matrixC} C(u, \pi_s)= \sum_{r=0}^{k} c_{r} P(u, \pi_{s})^{r},\end{equation} where $k \in \mathbb{N}$ and $c_{r} \in \mathbb{C}$.
\end{definition}
\noindent  The following theorem gives an explicit expression for the eigenvalues of $C(u, \pi_s)$. The formula (\ref{eigenvalues1}) has a misprint in the original paper, \cite[Theorem 1.17]{Mourad} so, we correct it here. 

\begin{theorem}\label{TheoremMourad}{\rm \cite{Mourad}} Let $C= C(u, \pi_s)$ be as in (\ref{matrixC}), where $s \in R_{m+1}^{\star}$, $d= O(\pi_{s}),$ and $g=$\rm{gcd}$(m,s).$ Then, the eigenvalues of $C$ are given by:
\begin{equation}\label{eigenvalues1}
\lambda_{t,p}(C) = \sum_{r=0}^{k} c_{r} \left( \mbox{\rm{prod}}_d (t) \right)^{\frac{r}{d}}\mbox{ \rm{exp}}\left( \frac{2\pi p r\rm{i}}{d} \right)
\end{equation}
where $p= 0, 1, \ldots, d-1$ and $t=1,2, \ldots, g$. In particular if $O(\pi_{s})=m$, the eigenvalues of $C$ are simply given by
\begin{equation*}
\lambda_{p}(C)= \sum_{r=0}^{k} c_{r} (\mbox{prod}\,u))^{\frac{r}{m}} \mbox{\rm{exp}} \left( \frac{2\pi  p r\rm{i}}{m} \right)
\end{equation*}
with $p=0,1, \ldots, m-1.$ 
\end{theorem}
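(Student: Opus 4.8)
\enspace The plan is to reduce everything to the spectrum of the single generalized permutation matrix $U=U(u)$. Since $C=\sum_{r=0}^{k}c_r U^{r}$ is a polynomial in $U$, any eigenpair $(\mu,x)$ of $U$ is automatically an eigenpair of $C$, because $Cx=\sum_{r=0}^{k}c_r U^{r}x=\bigl(\sum_{r=0}^{k}c_r\mu^{r}\bigr)x$. Hence once the eigenvalues $\mu$ of $U$ are known, those of $C$ follow by the substitution $\lambda(C)=\sum_{r=0}^{k}c_r\mu^{r}$, and the real task is to diagonalize $U$.

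To compute the spectrum of $U=D_u P_{\pi_s}$ I would first observe that it acts as a weighted shift, $(Ux)_i=u_i\,x_{\pi_s(i)}$, so $U$ couples only indices lying in the same orbit of $\pi_s$. As $\pi_s(x)=(x+s)\bmod m$, the permutation splits $R_m$ into $g=\gcd(m,s)$ orbits, each of the common length $d=O(\pi_s)=m/g$; choosing one representative $t$ per orbit ($t=1,\dots,g$) and reordering the basis orbit-by-orbit exhibits $U$ as (similar to) a block-diagonal matrix $\bigoplus_{t=1}^{g}B_t$, where each $B_t$ is a $d\times d$ weighted cyclic shift whose weights are $u_t,u_{\pi_s(t)},\dots,u_{\pi_s^{d-1}(t)}$ read around the cycle. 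Equivalently, and more cleanly, I would invoke Remark \ref{remark}: since $d=O(\pi_s)$ and \eqref{power3} give $\pi_s^{d}=\pi_{ds}=\pi_0=\mathrm{id}$, formula \eqref{v} yields $U^{d}(u)=D_{v_d}$ with $(v_d)_t=u_t u_{\pi_s(t)}\cdots u_{\pi_s^{d-1}(t)}=:P_t$; thus $U^{d}$ is diagonal with entries that are constant along each orbit and equal to the orbit product $P_t$.

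It then remains to diagonalize one cyclic block. The characteristic polynomial of a $d\times d$ weighted cyclic shift with weight-product $P_t$ is $\lambda^{d}-P_t$, so the eigenvalues of $B_t$ are the $d$ distinct roots $\mu_{t,p}=P_t^{1/d}\exp\!\bigl(2\pi p\,\mathrm{i}/d\bigr)$, $p=0,\dots,d-1$ (note $\mu_{t,p}^{\,d}=P_t$, consistent with $U^{d}=D_{v_d}$). Letting $t$ range over $1,\dots,g$ accounts for all $gd=m$ eigenvalues of $U$. Substituting $\mu_{t,p}$ into $\sum_{r=0}^{k}c_r\mu^{r}$ and using $\mu_{t,p}^{\,r}=P_t^{r/d}\exp(2\pi p r\,\mathrm{i}/d)$ reproduces exactly \eqref{eigenvalues1}; the case $O(\pi_s)=m$ is just $g=1$, $d=m$, $P_1=\mathrm{prod}\,u$, which collapses \eqref{eigenvalues1} to the stated formula for $\lambda_p(C)$.

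The step I expect to be the main obstacle is the bookkeeping that makes the block decomposition rigorous: confirming that orbit representatives can be labelled $t=1,\dots,g$ under the paper's $1$-based matrix indices against the $0$-based set $R_m$, verifying that every orbit has length exactly $d$, and fixing one branch of $P_t^{1/d}$ so that the map $p\mapsto\mu_{t,p}$ stays consistent when $\mu_{t,p}^{r}$ is expanded. The remaining ingredients — the polynomial-substitution principle and the spectrum of a single weighted cyclic shift — are routine.
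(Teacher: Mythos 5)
Your argument is correct, but note that the paper you are comparing against does not actually prove this statement: Theorem \ref{TheoremMourad} is imported verbatim (with a corrected misprint in \eqref{eigenvalues1}) from \cite{Mourad}, so there is no in-paper proof to measure yours against. On its own merits, your route is the natural one and it closes: the spectral mapping observation $\sigma(C)=\{\sum_r c_r\mu^r:\mu\in\sigma(U)\}$ is valid for any polynomial in a square matrix, and the orbit decomposition of $\pi_s$ into $g=\gcd(m,s)$ cycles of common length $d=m/g$ does conjugate $U$ to a direct sum of $d\times d$ weighted cyclic shifts with characteristic polynomials $\lambda^d-P_t$, which yields exactly the claimed list of $gd=m$ eigenvalues; the branch ambiguity in $P_t^{1/d}$ is harmless because all $d$ branches are swept out as $p$ runs over $0,\dots,d-1$. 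Two small points worth pinning down if you write this up: (i) the claim that the $d$ roots of $\lambda^d-P_t$ are \emph{distinct} uses $P_t\neq 0$, i.e.\ the standing assumption $u_i\neq 0$ made in Section~2 (if some $P_t=0$ the block is nilpotent and the formula still lists the correct eigenvalues with multiplicity, but they are no longer distinct and $U$ need not be diagonalizable); and (ii) the multiplicity count in \eqref{eigenvalues1} is most cleanly justified by noting that each block, having $d$ distinct eigenvalues, is diagonalizable, so $C$ is simultaneously diagonalized and every pair $(t,p)$ genuinely contributes one eigenvalue. Your own tooling around this theorem (e.g.\ the eigenvector matrices in Propositions \ref{Prop0} and \ref{Prop9} and Lemma \ref{lemmax}) is in effect the explicit diagonalization your proof sketch calls for.
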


\noindent Additionally, from \cite[Theorem 1.14]{Mourad} for a generalized permutation matrix $P(u, \pi_{s})$, $d=O( \pi_{s})$ and $g=$\rm{gcd}$(m,s),$ the eigenvalues of $P(u, \pi_{s})^{d}$ are given by $$ \lambda_{i} (P(u, \pi_{s})^{d})= \mbox{\rm{prod}}_d (i) , \mbox{\,for  \, \,} i= 1, 2,\ldots, g,$$
where each $\lambda_i$ is repeated $d$ times, and its corresponding eigenvectors $V_{i}^{(t)}$, are the following:
\begin{equation}\label{eigv} V_{i}^{(t)}(P(u, \pi_{s})^{d}) = (\mbox{\rm{prod}}_d (i)) \textbf{e}_{i+tg},
\end{equation}
for each $t=1, \ldots,d$.\\

\noindent The aim of this paper is to present explicit formulas for the eigenvectors of $C(u, \pi_s)$, for the cases:

\begin{itemize}
    \item $\pi_{s} \in \Gamma_{m},$ when $s=1;$ for any $k \in \mathbb{N}$.
    \item $\pi_{s} \in \Gamma_{m},$ when $s\geq 2$ and \rm{gcd}$(m,s)=1;$ for any $k\in \mathbb{N}$. 
    \item $\pi_{s} \in \Gamma_{m}, \mbox{ when\,} s | m $. 
    
    \end{itemize}

\section{Eigenvectors of $C(u, \pi_s)$}

In this section, explicit formulas for the eigenvectors of $C(u, \pi_s)$, are given for the cases presented in the end of previous section. 
Throughout this text we assume that all $u_i$, $ i=1, \ldots,m$, are nonzero. Note that, if $u_i=0$ for some $i=1,2,\ldots, m$, then the matrix $P(u, \pi_{s})$ would not be a generalized permutation matrix. 

\subsection{\bf Case $s=1.$}

The eigenvectors of  $C(u,\pi_1)$ are presented next. 
\begin{proposition} \label{Prop0}
\noindent Let $u_1, \ldots, u_n \neq 0$, $C(u, \pi_1)$ be the matrix of order $m$ as in (\ref{matrixC}) corresponding to $P(u, \pi_1)$. Let 
$\lambda=\sqrt[m]{\mbox{\rm{prod}}\,u},$  $\Lambda_1=\mbox{\, \rm{diag}\,}\left(\dfrac{\lambda}{u_m}, \dfrac{\lambda^2}{u_1u_m},\ldots,\dfrac{\lambda^{m-1}}{u_1u_2\cdots u_{m-2}u_m},1\right)$ and $F$ be the discrete Fourier transform. Then, the columns of the matrix $\Lambda_1F$ form a basis of eigenvectors of $C(u, \pi_1)$.
\end{proposition}

\begin{proof}
 By Theorem \ref{TheoremMourad},
\begin{equation} \label{e}\sigma(C(u, \pi_1))=
\left\lbrace\sum_{r=0}^{k}c_r\lambda^r,\sum_{r=0}^{k}c_r(\lambda\omega)^r,\ldots,\sum_{r=0}^{k}c_r(\lambda\omega^{(m-1)})^r\right\rbrace.
\end{equation}
\noindent Note that the matrix $P(u, \pi_{1})$ is diagonalizable since its eigenvalues are distinct (\cite[Corollary 1.15]{Mourad}).
Let
\[T=\Lambda_1 F=\left(\omega^{i(j-1)}\frac{\lambda^{j}}{u_1u_2\cdots u_{j-1}u_m}\right)\,\,,1\leq i,j\leq m.\]
\noindent For $j=1,2,\ldots,m$, let $t(j)$ be the $j$-th column of the matrix $T$ as follows:
$$
t(j)=\left(\begin{array}{c}
    \omega^{(j-1)}\frac{\lambda}{u_m}  \vspace{0.3cm}\\
    \omega^{2(j-1)}\frac{\lambda^2}{u_1u_m}\\
    \vdots\\
    \omega^{(m-1)(j-1)}\frac{\lambda^{(m-1)}}{u_1u_2\cdots u_{m-2}u_m}\\
    1\\
\end{array}\right).
$$
Thus, 
$$
P(u, \pi_{1})t(j)=\lambda\omega^{(j-1)}t(j).$$
Consequently, $t(j),$ is an  eigenvector of $P(u, \pi_{1})^{r}$ associated to $\lambda^{r}\omega^{(j-1)r}$. Then the claim follows easily.   
\end{proof}

\noindent In the next example, using Theorem \ref{TheoremMourad}, we present the eigenvalues and eigenvectors of $P(u, \pi_{1}),$ with $u=[u_1,u_2,u_{3}]^{t}.$ Additionally, from Proposition \ref{Prop0} the eigenvectors of a particular $C(u, \pi_1)$ are given. 

\begin{example} {\rm
Let $u=[u_1,u_2,u_3]^{T}$ with $u_1, u_2, u_3 \neq 0$, and $P(u, \pi_{1})=\left(\begin{array}{ccc}
           0  & u_1 &  0 \vspace{0.5cm}\\
           0  & 0   & u_2 \vspace{0.5cm}\\
           u_3& 0   &  0 \\
\end{array}\right)$, be a matrix as in (\ref{matrixU}). Using Theorem \ref{TheoremMourad}, its spectrum is $$\{\sqrt[3]{u_1u_2u_3},-\frac{1}{2}(1+\rm{i}\sqrt{3})\sqrt[3]{u_1u_2u_3},-\frac{1}{2}(1-\rm{i}\sqrt{3})\sqrt[3]{u_1u_2u_3} \},$$
and the corresponding eigenvectors are:
\begin{eqnarray*}
V^{(1)}_1=\left(\begin{array}{c}
    \frac{\sqrt[3]{u_1u_2u_3}}{u_3}\\
    \frac{(\sqrt[3]{u_1u_2u_3})^2}{u_1u_3}\\
    1
\end{array}\right),
V^{(1)}_2= \left(\begin{array}{c}
    -\frac{1}{2}(1+\rm{i}\sqrt{3})\frac{ \sqrt[3]{u_1u_2u_3}}{u_3}\\
    -\frac{1}{2}(1-\rm{i}\sqrt{3})\frac{(\sqrt[3]{u_1u_2u_3})^2}{u_1u_3}\\
    1
\end{array}\right),
V^{(1)}_3= \left(\begin{array}{c}
    -\frac{1}{2}(1-\rm{i}\sqrt{3})\frac{\sqrt[3]{u_1u_2u_3}}{u_3}\\
    -\frac{1}{2}(1+\rm{i}\sqrt{3})\frac{(\sqrt[3]{u_1u_2u_3})^2}{u_1u_3}\\
    1
\end{array}\right),
\end{eqnarray*}
respectively.

\noindent Now, let us consider $$C(u, \pi_1)=\mbox{\rm{i}}\, P(u, \pi_{1})^0-P(u, \pi_{1})^1+3P(u, \pi_{1})^2-\frac{1}{6}\mbox{\rm{i}}\, P(u, \pi_{1})^3+\frac{1}{2}P(u, \pi_{1})^4-\frac{1}{2}P(u, \pi_{1})^5,$$ when $u_1=-2;\,\,u_2=-3;\,\, u_3=1$ and $k=5$. That is,
$$
C(u, \pi_1)=\left(\begin{array}{ccc}
           0  & -4 &  0  \vspace{0.5cm}\\
           0  & 0   & -6 \vspace{0.5cm}\\
           2  & 0   &  0 \\
\end{array}\right).
$$
Then, from Theorem \ref{TheoremMourad}, $\sigma(C(u, \pi_1))=\{2\sqrt[3]{6},-(1+\rm{i}\sqrt{3})\sqrt[3]{6},-(1-\rm{i}\sqrt{3})\sqrt[3]{6} \}.$ From Proposition \ref{Prop0}, the columns $t(1),t(2), t(3)$ below form a basis of eigenvectors for $C(u, \pi_1):$
$$
t(1)=\left(\begin{array}{c}
    \sqrt[3]{6}\vspace{0.4cm}\\
    -\frac{\sqrt[3]{36}}{2}\vspace{0.4cm}\\
    1\\
\end{array}\right);\,\,\,
t(2)=\left(\begin{array}{c}
    \frac{-(1+\rm{i}\sqrt{3})\sqrt[3]{6}}{2}\vspace{0.4cm}\\
    \frac{(1-\rm{i}\sqrt{3})\sqrt[3]{36}}{4}\vspace{0.4cm}\\
    1\\
\end{array}\right)
;\,\,\,
t(3)=\left(\begin{array}{c}
    \frac{(1-\rm{i}\sqrt{3})\sqrt[3]{6}}{2}\vspace{0.4cm}\\
    \frac{(1+\rm{i}\sqrt{3})\sqrt[3]{36}}{4}\vspace{0.4cm}\\
    1\\
\end{array}\right).
$$
Note that, the spectrum of $C(u, \pi_1)$ coincide with the one determined in Theorem  \ref{TheoremMourad} however, the explicit expression for the eigenvectors is given here. 
\endproof
}

\end{example}

\begin{proposition}\label{Propremark}
Let $C(u, \pi_1)$ be as in (\ref{matrixC}) for $s=1$. Then, $C(u, \pi_1)$
can be expressed as a linear combination of the matrices 
$$I_m, P(u, \pi_{1}),P(u, \pi_{1})^{2},\ldots,P(u, \pi_{1})^{m-1}.$$
\end{proposition}

\begin{proof} We split the proof into two cases. \\
\textbf{Case $k< m-1$:} Then, $C(u, \pi_1)=\sum_{r=0}^{m-1}c_{r}P(u, \pi_{1})^r$, where $c_{k+1}=c_{k+2}=\cdots=c_{m-1}=0$. \\
\textbf{Case $k\geq m$:} In this case we start to consider $k=qm-1, $ with $q \geq 1.$ Thus,
\begin{eqnarray*}
C(u, \pi_1)& =&  \sum_{r=0}^{m-1}c_{r}
P(u, \pi_{1})^{r}+ \sum_{r=m}^{2m-1}c_{r}P(u, \pi_{1})^{r}+ \sum_{r=2m}^{3m-1}c_{r}P(u, \pi_{1})^{r}
+ ...\\
&+& \sum_{r=(q-1)m}^{qm-1}c_{r}P(u, \pi_{1})^{r}.
\end{eqnarray*}
Therefore,
\begin{eqnarray*}
C(u, \pi_1)& =& \sum_{r=0}^{m-1}c_{r}P(u, \pi_{1})^{r}+ \sum_{r=0}^{m-1}c_{m+r}P(u, \pi_{1})^{m+r}+ \sum_{r=0}^{m-1}c_{2m+r}P(u, \pi_{1})^{2m+r}\\
&+& \sum_{r=0}^{m-1}c_{3m+r}P(u, \pi_{1})^{3m+r}+ \cdots
+ \sum_{r=0}^{m-1}c_{(q-1)m+r}P(u, \pi_{1})^{(q-1)m+r} .
\end{eqnarray*}
From Remark \label{Remark}, as $P(u, \pi_{1})^{\theta m +r} = (\mbox{prod}\,u)^{\theta} P(u, \pi_{1})^{r}, \theta \in \mathbb{N} $ we have:
\begin{align*}
C(u, \pi_1)&=\sum_{r=0}^{m-1}c_{r}P(u, \pi_{1})^r+\sum_{r=0}^{m-1}(\mbox{prod}\,u) c_{m+r}P(u, \pi_{1})^{r}\\
&+\sum_{r=0}^{m-1}(\mbox{prod}\,u)^2c_{2m+r}P(u, \pi_{1})^r+\cdots
+\sum_{r=0}^{m-1}(\mbox{prod}\,u)^{q-1}c_{(q-1)m+r}P(u, \pi_{1})^r\\
&=\sum_{r=0}^{m-1}(c_r+(\mbox{prod}\,u)c_{m+r}+\cdots+(\mbox{prod}\,u)^{q-1}c_{(q-1)m+r})P(u, \pi_{1})^r\\
& =\sum_{r=0}^{m-1} (\sum_{j=0}^{q-1}c_{jm+r}(\mbox{prod}\,u)^{j}) P(u, \pi_{1})^{r}=\sum_{r=0}^{m-1}(p_{q-1}^{c_{jm+r}}(\mbox{prod}\,u))P(u, \pi_{1})^r,
\end{align*}
\medskip
where $\sum_{j=0}^{q-1}c_{jm+r}(\mbox{prod}\,u)^{j}=p_{q-1}^{c_{jm+r}}(\mbox{prod}\,u).$\\

\noindent Thus, if $k\geq m$ then $k=qm+l$, with $q \in \mathbb{N}$, and $0\leq l < m$. Note that $qm+l = (qm-1)+ l +1$. If $l+1 = m$ then $qm+l= (q+1)m-1$ and from above the result follows. 
If $l +1<  m,$ we take $c_{qm+l+2}= \cdots = c_{qm+ (m-l)}=0,$ and the result is also obtained.
\end{proof}

\begin{remark} 
{\rm
From {\rm \cite[p. 68]{Davis}} the circulant 
\begin{equation}\label{circ} \mbox{\rm{circ}} (c_0, \ldots, c_{m-1})=\sum_{r=0}^{m-1} c_{r} P_{\pi_{1}}^{r}.
\end{equation}
Then, from Proposition \ref{Propremark} the circulant can be written as a generalized circulant, as the expression in (\ref{circ}) is precisely $C(u, \pi_1)$ for $k=m-1$, $u=(u_i),$ $u_i= 1,$ for all $i= 1, \ldots, m$, and $P_{\pi_1}= P(u, \pi_1).$}
\end{remark}

\subsection{ Case $s \geq 2$ with \rm{gcd}$(s,m)=1.$}
In the next proposition we study the eigenvectors of $C(u, \pi_s)$ when  $s\geq 2$ and \rm{gcd}$(m,s)=1$. 
\begin{proposition} \label{Prop9}
Let $u_1, \ldots, u_n \neq 0$, and $C=C(u, \pi_s)$ be the matrix of order $m$ as in (\ref{matrixC}), where $s \geq 2$ and \rm{gcd}$(s,m)=1.$ Let $\lambda=\sqrt[m]{\mbox{prod}\,u}.$
Then the columns of the matrix below,

\begin{equation}\label{second}
\left(\begin{array}{ccccc}
t_1 &t_1 \omega   & t_1\omega^2   &\cdots &t_1\omega^{m-1}\\
t_2 & t_2 \omega^2 & t_2\omega^{2\cdot 2}   &\cdots &t_2\omega^{2(m-1)}\\
t_3 &  t_3 \omega^3 & t_3\omega^{3\cdot 2}   &\cdots &t_3\omega^{3(m-1)}\\
\vdots & \vdots  &   \vdots  & \ddots&  \vdots\\
t_{m-1} &  t_{m-1} \omega^{(m-1)} & t_{m-1}\omega^{(m-1)\cdot 2}   &\cdots &t_{m-1}\omega^{(m-1)(m-1)}\\
t_m & t_m & t_m & \cdots & t_m\\
\end{array}\right),
\end{equation}
form a basis of eigenvectors for $C(u, \pi_s)$ where, the first column of the matrix in (\ref{second}) is an eigenvector of $P(u,\pi_s)$ associated with $\lambda$. 
\end{proposition}

\begin{proof}
From Theorem \ref{TheoremMourad} the eigenvalues of $C$ are as in (\ref{eigenvalues1}). Consider the eigenequation 
$P(u,\pi_s)\mathbb{T}=\lambda\mathbb{T}$, where $\mathbb{T}=[t_1,t_2,t_3,\ldots,t_m]^T$. From \cite[Theorem 1.4]{Mourad}  $\mbox{\rm{det}}(P(u,\pi_s)-\lambda I)=u_1u_2\cdots u_m-\lambda^m=0$ and then the rank of the matrix $P(u,\pi_s)-\lambda I$ is less than $m$. Thus, the eigenequation have a nontrivial solution $\mathbb{T}$.
Let
$$\Lambda_s=\mbox{\, \rm{diag}\,}(t_1,t_2,\ldots,t_{m}).$$
Then
\begin{equation}
\Lambda_s F=(\omega^{i(j-1)}t_i),1\leq i,j \leq m. 
\end{equation}

Let $t(j)$ be the $j$-th column the $\Lambda_s F$ matrix, with $j=1, 
\ldots m$. Then 
\begin{align}\label{Cris}
P(u, \pi_{s})t(j)=P(u, \pi_{s})&\left(\begin{array}{c}
t_1\omega^{j-1}\\
t_2\omega^{2(j-1)}\\
\vdots\\
t_{m-s}\omega^{(m-s)(j-1)}\\
t_{m-s+1}\omega^{(m-s+1)(j-1)}\\
\vdots\\
t_m\\
\end{array}\right)=
\left(\begin{array}{c}
u_1t_{s+1}\omega^{(s+1)(j-1)}\\
u_2t_{s+2}\omega^{(s+2)(j-1)}\\
\vdots\\
u_{m-s}t_{m}\\
u_{m-s+1}t_1\omega^{j-1}\\
\vdots\\
u_m t_{s}\omega^{s(j-1)}\\
\end{array}\right)\\
&=\left(\begin{array}{c}
\lambda t_{1}\omega^{(s+1)(j-1)}\\
\lambda t_{2}\omega^{(s+2)(j-1)}\\
\vdots\\
\lambda t_{m-s}\\
\lambda t_{m-s+1}\omega^{j-1}\\
\vdots\\
\lambda t_{m}\omega^{s(j-1)}\\
\end{array}\right)
= \lambda \omega^{s(j-1)}
\left(\begin{array}{c}
t_1\omega^{(j-1)}\\
t_2\omega^{2(j-1)}\\
\vdots\\
t_{m-s}\omega^{(m-s)(j-1)}\\
t_{m-s+1}\omega^{(m-s+1)(j-1)}\\
\vdots\\
t_m\\
\end{array}\right),
\end{align}

Note that, 
$$\lambda \omega^{s(j-1)}t_{m-s}\omega^{(m-s)(j-1)}=\lambda t_{m-s}\omega^{(m-s)(j-1)+s(j-1)}=\lambda t_{m-s}\omega^{m(j-1)}=\lambda t_{m-s}$$ and $$\lambda \omega^{(j-1)s}t_{m-s+1}\omega^{(m-s+1)(j-1)}=\lambda t_{m-s+1}\omega^{m(j-1)+(j-1)}=\lambda t_{m-s+1}\omega^{j-1}.$$ Thus, the column $j$ of $\Lambda_s F$ corresponds to the eigenvector of $P(u, \pi_s)$ associated to eigenvalue $ \lambda \omega^{(j-1)s}$, for $j=1, 2,\ldots, m.$
\end{proof}

\noindent The next corollary gives closed expressions for the entries of the eigenvector $\mathbb{T}=[ t_1, \, \cdots, \, t_{m-1}, \, t_m]^{T}.$
associated to $\lambda =\sqrt[m]{\mbox{prod}\,u}$ when $s=2$ and \rm{gcd}$(m,2) =1.$\\
\begin{corollary}\label{Coro9}
Let $u_1, \ldots, u_n \neq 0$, and $C=C(u, \pi_2)$ with \rm{gcd}$(m,2)=1,$ and $ \lambda=\sqrt[m]{\mbox{\rm{prod}}\, u.}$ 
Then´, the columns of the matrix as in (\ref{second}) form a basis of eigenvectors of the matrix $C(u, \pi_2)$, where
\begin{align*}
    t_{2j+1}&=\dfrac{\lambda^{j}}{\prod_{\ell=0}^{j-1}u_{2\ell+1}}t_1,\,\, j=1,\ldots,q\\
    t_{2j}&=\dfrac{\lambda^{q+j}}{\prod_{\ell=0}^{q}u_{2\ell+1}\,\prod_{\ell=0}^{j-1}u_{2\ell}}t_1,\,\, j=1,\ldots,q\\ 
\end{align*}
with $u_0=1$.
\end{corollary}

\begin{proof}

It is clear that $m>2$ and $m$ is odd, because \rm{gcd}$(2,m)=1$. Let $m=2q+1$, $q<m$.
Let us consider  $P(u, \pi_2)$, with $u= [u_1, \ldots, u_m]^{T},$ and the eigenequation \begin{equation}\label{eigeneq}
P(u, \pi_2) \mathbb{T} =\lambda \mathbb{T}.
\end{equation}
It is easy to show that (\ref{eigeneq}) generates a system with the following pair of equations:
\begin{eqnarray}\label{eq}
    u_{1+2j}t_{2(j+1)+1}=\lambda t_{2j+1}; & \, \, 
    u_{2+2j}t_{2(j+1)+2}=\lambda t_{2j+2},
\end{eqnarray} 
for $j=0,1,2\ldots q-2$ for $q\geq 2$, 
and the three additional ones:
\begin{eqnarray*}
    u_{1+2(q-1)}t_{2q+1}&=\lambda t_{2(q-1)+1}\\
    u_{m-1}t_{1}&=\lambda t_{m-1}\\
    u_{m}t_{2}&=\lambda t_{m}.\\
\end{eqnarray*}
When $q=1$, the equations in (\ref{eq}) do not exist.
Note that the indices are calculated $\mbox{\rm{mod}}\, m$.

Solving the system based on $t_1$, we have:
\begin{align*}
    t_{2j+1}&=\dfrac{\lambda^{j}}{\prod_{\ell=0}^{j-1}u_{2\ell+1}}t_1,\,\, j=1,\ldots,q\\
    t_{2j}&=\dfrac{\lambda^{q+j}}{\prod_{\ell=0}^{q}u_{2\ell+1}\,\prod_{\ell=0}^{j-1}u_{2\ell}}t_1,\,\, j=1,\ldots,q,\\ 
\end{align*}
where $u_0=1$.

Therefore, from the Proposition \ref{Prop9}, the result follows.
\end{proof}

\begin{example}
{\rm
Consider in this example the case $s=2, m= 5$. Thus, from Proposition \ref{Prop9} and Corollary \ref{Coro9}, consider:
$$
\Lambda_2=\mbox{\, \rm{diag}\,}\left(t_1, \frac{\lambda^3 }{u_{1} u_{3} u_{5}}t_1,\frac{\lambda}{u_{1}}t_1,\frac{\lambda ^{4}}{u_{1} u_{2}u_{3}u_{5}}t_1, \frac{\lambda^2}{u_1 u_3}t_1\right).
$$
Then, taking $t_1=1,$

\begin{eqnarray*}
\Lambda_2 F &=&
\left(\begin{array}{ccccc}
1 & 0 & 0 & 0 & 0 \\
0 & \frac{\lambda ^{3}}{u_{1}u_{3}u_{5}} & 0 & 0 & 0 \\
0 & 0 & \frac{\lambda}{u_{1}} & 0 & 0 \\
0 & 0 & 0 & \frac{\lambda ^{4}}{u_{1}u_{2}u_{3}u_{5}} & 0 \\
0 & 0 & 0 & 0 & \frac{\lambda ^{2}}{u_{1}u_{3}}
\end{array}\right)\left(
\begin{array}{ccccc}
1 & w & w^{2} & w^{3} & w^{4} \\
1 & w^{2} & w^{4} & w & w^{3} \\
1 & w^{3} & w & w^{4} & w^{2} \\
1 & w^{4} & w^{3} & w^{2} & w \\
1 & 1 & 1 & 1 & 1%
\end{array}\right)\\
&=&
\left(\begin{array}{ccccc}
1 & w
& w^{2} & w^{3} & w^{4}\vspace{0.2cm}\\
\frac{\lambda ^{3}}{u_{1}u_{3}u_{5}} & w^{2}\frac{\lambda^5 }{u_1 u_{3}u_{5}} & w^{4}\frac{\lambda ^{3}}{u_{1}u_{3}u_{5}} & w\frac{\lambda ^{3}}{u_{1}u_{3}u_{5}} & w^{3}\frac{\lambda ^{3}}{u_{1}u_{3}u_{4}}\vspace{0.2cm} \\
\frac{\lambda}{u_{1}} & w^{3}\frac{\lambda}{u_{1}} & w\frac{\lambda}{u_{1}} & w^{4}
\frac{\lambda}{u_{1}} & w^{2}\frac{\lambda}{u_{1}} \vspace{0.2cm}\\
\frac{\lambda ^{4}}{u_{1}u_{2}u_{3}u_{5}} & w^{4}\frac{\lambda ^{4}}{u_{1}u_{2}u_{3}u_{5}} &
w^{3}\frac{\lambda ^{4}}{u_{1}u_{2}u_{3}u_{5}}& w^{2}\frac{\lambda ^{4}}{u_{1}u_{2}u_{3}u_{5}}
& w\frac{\lambda ^{4}}{u_{1}u_{2}u_{3}u_{5}} \vspace{0.2cm}\\
\frac{\lambda ^{2}}{u_{1}u_{3}} & \frac{\lambda ^{2}}{u_{1}u_{3}} & \frac{\lambda ^{2}}{u_{1}u_{3}} & \frac{\lambda ^{2}}{u_{1}u_{3}}& \frac{\lambda ^{2}}{u_{1}u_{3}}
\end{array}
\right)\\
&=&\left[
\begin{array}{ccccc}t(1)&  t(2)& t(3) &t(4)& t(5)
\end{array}
\right],
\end{eqnarray*}
where for $j=1,2, \ldots, 5,$ $t(j)$ denotes the column $j$ of the previous matrix.
Doing some computations, and following the formulas in (\ref{Cris}) it is easy to check that
\begin{eqnarray*}
P(u,\pi_2)T(1)& =& \lambda T(1),\\
P(u, \pi_2)T(2)&=&(\lambda w^2) T(2),\\
P(u,\pi_2)T(3)&=&(\lambda w^4) T(3),\\
P(u,\pi_2)T(4)&=&(\lambda w)T(4),\\
P(u,\pi_2)T(5)& = &(\lambda w^{3}) T(5).
\end{eqnarray*}
\endproof
}
\end{example}

\subsection{\bf Case $s|m$.}

In this subsection we study the eigenvectors of $C(u, \pi_s)$ when $s|m$. 
In this case, there exists $k_0 \in \mathbb{N}$ such that $m=k_0s$.\\

\noindent From \cite[Corollary 1.15]{Mourad} the eigenvalues of $P(u, \pi_s)$, with $g=$\rm{gcd}$(m,s),$ and $d=O( \pi_{s}),$ are given by

\begin{equation}\label{eigenvaluesU7-2}
\lambda_{t,p}=\left(\mbox{\rm{prod}}_d(t) \right)^{\frac{1}{d}}\mbox{\rm{exp}}\left(\frac{2\pi\rm{i}}{d}\right)^p
\end{equation}

where $t=1,2,\ldots,g$ and $p= 0, 1, \ldots, d-1$.\\

\noindent Note that, as $m=k_0s$, for some $k_{0}\in \mathbb{N},$ by \cite[Lemma 1.6]{Mourad}, $g=s$ and $d=k_0$. Thus, the expression in (\ref{eigenvaluesU7-2}) can be written as:

\begin{equation}\label{eigenvaluesU7-3}
\lambda_{t,p}=\left(\mbox{\rm{prod}}_{k_0} (t)\right)^{\frac{1}{k_0}}\mbox{\rm{exp}}\left(\frac{2\pi\rm{i}}{k_0}\right)^{p}=\left(\mbox{\rm{prod}}_{k_0} (t)\right)^{\frac{1}{k_0}}\omega^{p}
\end{equation}
where $t=1,2,\ldots,s$ and $p= 0, 1, \ldots, k_0-1$. \\

In this case $P(u, \pi_s)$ can be written by blocks in the following form:
\begin{equation}
\label{band_matrix2x}
 P(u, \pi_s)=\left(
\begin{array}{cccccc}
 \bf 0  & U_1  &\bf 0  & \bf 0 &\cdots & \bf 0 \\
 \bf 0  &\bf 0 &U_2    & \bf 0 &\cdots & \bf 0 \\
 \bf 0  & \bf 0&\bf 0  &U_3    &\cdots & \bf 0 \\
 \vdots &      &       &       &\ddots &\vdots \\
 \bf 0  &\bf 0 &\bf 0  &\bf 0  &\cdots & U_{k_0-1}\\
 U_{k_0}    & \bf 0&\bf 0  &\bf 0  &\cdots & \bf 0  \\
\end{array}
\right),
\end{equation}

\noindent where $$U_k={\rm diag}(u_{1+(k-1)s},u_{2+(k-1)s}, \ldots,u_{s+(k-1)s}),\, k=1,\ldots, k_{0},$$ 

\noindent and the blocks $\bf 0$ are $s$-by-$s$ matrices. 
Let
$$\mathbb{T}^{\lambda_{t,0}}=
\left[ \begin{array}{c}
\mathbb{T}^{\lambda_{t,0}}_{1}\\
\mathbb{T}^{\lambda_{t,0}}_{2}\\
\vdots \\
\mathbb{T}^{\lambda_{t,0}}_{k_0}
\end{array} 
\right], 
$$

where
\begin{align*}
\mathbb{T}^{\lambda_{t,0}}_{1}&= \textbf{e}_{t},\\
\mathbb{T}^{\lambda_{t,0}}_{2}&= \frac{\lambda_{t,0}}{\mbox{\rm{prod}}_1(t)}\textbf{e}_{t},\\
\mathbb{T}^{\lambda_{t,0}}_{3}&= \dfrac{\lambda_{t,0}^2}{\mbox{\rm{prod}}_2(t)}\textbf{e}_{t},\\
&\vdots\\
\mathbb{T}^{\lambda_{t,0}}_{k_0}&=\frac{\lambda_{t,0}^{k_0-1}}{\mbox{\rm{prod}}_{k_0-1}(t)}\textbf{e}_{t},\\
\end{align*}
where $\textbf{e}_{t}$ is the $t$-th column of the identity matrix. 
Then, 
$P(u,\pi_s)\mathbb{T}^{\lambda_{t,0}}= \lambda_{t,0} \mathbb{T}^{\lambda_{t,0}}$ as, for each $k=1,2,\ldots,k_0$, we have

\begin{align} \label{smallmatrices}
    U_k\mathbb{T}^{\lambda_{t,0}}_{k+1}&= u_{t+(k-1) s}\,\dfrac{\lambda^{k}_{t,0}}{\mbox{\rm{prod}}_k(t)}\textbf{e}_{t}= \dfrac{\lambda^{k}_{t,0}}{\mbox{\rm{prod}}_{k-1}(t)}\textbf{e}_{t}
    =\lambda_{t,0}\left( \dfrac{\lambda^{k-1}_{t,0}}{\mbox{\rm{prod}}_{k-1}(t)}\textbf{e}_{t}\right)=\lambda_{t,0} \mathbb{T}^{\lambda_{t,0}}_{k},
\end{align}
with the sub indices taken $(\mbox{\rm mod})\, k_0 $ and $\mbox{\rm prod}_0(t)=1.$
\\

\noindent Now, for each $\ell = 0,1, \ldots, k_{0}-1$, let us define the vectors:
$$\mathbb{T}^{\lambda_{t,0}}(\omega^\ell)= 
\left[ \begin{array}{c}
\omega^\ell\mathbb{T}_1^{\lambda_{t,0}}\\ 
\omega^{2\ell}\mathbb{T}_2^{\lambda_{t,0}}\\
\omega^{3\ell}\mathbb{T}_3^{\lambda_{t,0}}\\
\vdots\\
\omega^{k_0\ell}\mathbb{T}_{k_0}^{\lambda_{t,0}}
\end{array}\right].
$$

\begin{lemma}\label{lemmax} For each $t=1,\ldots,s$,
the vectors 
$$
\mathbb{T}^{\lambda_{t,0}}(\omega^{0}) , \mathbb{T}^{\lambda_{t,0}}(\omega^{1}),  \mathbb{T}^{\lambda_{t,0}}(\omega^{2}), \cdots , \mathbb{T}^{\lambda_{t,0}}(\omega^{k_0-1})
$$
are the eigenvectors of $P(u,\pi_s)$, corresponding to the eigenvalues $$\lambda_{t,0}=\lambda_{t,0}w^0, \lambda_{t,1}=\lambda_{t,0}w,\lambda_{t,2}=\lambda_{t,0}w^2,\ldots,\lambda_{t,k_0-1}=\lambda_{t,0}w^{k_0 -1},$$ respectively.
\end{lemma}

\begin{proof} As proven before, we have $P(u,\pi_s)\mathbb{T}^{\lambda_{t,0}}(\omega^{0})=\lambda_{t,0} \mathbb{T}^{\lambda_{t,0}}(\omega^{0}).$

For $\ell =1, \ldots, k_0 -1$ and, from the expressions in (\ref{smallmatrices}), assuming that $\mbox{\rm{prod}}_0(t)=1$ and $\omega^{k_0}=1,$ we have:
\begin{eqnarray*}
P(u,\pi_s)\mathbb{T}^{\lambda_{t,0}}(\omega^{\ell})&=&\left[ \begin{array}{c}
\omega^{2\ell} U_1\mathbb{T}_2^{\lambda_{t,0}}\\ 
\omega^{3\ell} U_2\mathbb{T}_3^{\lambda_{t,0}}\\
\vdots\\
\omega^{k_0\ell} U_{k_0-1}\mathbb{T}_{k_0}^{\lambda_{t,0}}\\
\omega^{\ell}U_{k_0}  \mathbb{T}_1^{\lambda_{t,0}}
\end{array}\right]
=
\left[ \begin{array}{c}
\omega^{2\ell} \lambda_{t,0}\mathbb{T}_{1}^{\lambda_{t,0}}\\ 
\omega^{3\ell} \lambda_{t,0}\mathbb{T}_{2}^{\lambda_{t,0}}\\
\vdots\\
\omega^{k_0\ell} \lambda_{t,0}\mathbb{T}_{k_0-1}^{\lambda_{t,0}}\\
\omega^{\ell}\lambda_{t,0}  \mathbb{T}_{k_0}^{\lambda_{t,0}}
\end{array}\right]\\
&=&
\lambda_{t,0}\omega^{\ell}\left[ \begin{array}{c}
\omega^\ell\mathbb{T}_1^{\lambda_{t,0}}\\ 
\omega^{2\ell}\mathbb{T}_2^{\lambda_{t,0}}\\
\omega^{3\ell}\mathbb{T}_3^{\lambda_{t,0}}\\
\vdots\\
\omega^{k_0\ell}\mathbb{T}_{k_0}^{\lambda_{t,0}}
\end{array}\right]
=(\lambda_{t,0}\omega^{\ell})\mathbb{T}^{\lambda_{t,0}}(\omega^{\ell}).
\end{eqnarray*}
Then, the result follows. 
\end{proof}

\begin{proposition}\label{eigenvectorsU}
The set 
\begin{equation*}\label{TUs}
\{\mathbb{T}^{\lambda_{t,\ell}}\,:\, t=1,2,\ldots, s\textit{ and }\ell=0,1,\ldots,k_0-1\}
\end{equation*}
forms a basis of eigenvectors of $P(u,\pi_s)$.
\end{proposition}

\begin{proof}
This result is a consequence of Lemma \ref{lemmax}.
\end{proof}

\begin{example}
{\rm
In this example, for $m=9$ and $s=3$, the eigenvectors of $P(u,\pi_3)$ corresponding to the list of eigenvalues $\lambda_{t,\ell}, t=1,2,3, \ell=0,1,2$ are presented. By the previous proposition the eigenvectors are given by:

\begin{equation*}
\mathbb{T}^{\lambda_{1,0}},\mathbb{T}^{\lambda_{1,1}},\mathbb{T}^{\lambda_{1,2}},
\mathbb{T}^{\lambda_{2,0}},\mathbb{T}^{\lambda_{2,1}},\mathbb{T}^{\lambda_{2,2}},
\mathbb{T}^{\lambda_{3,0}},\mathbb{T}^{\lambda_{3,1}},\mathbb{T}^{\lambda_{3,2}}
\end{equation*}

\begin{align*}
\mathbb{T}^{\lambda_{1,0}}&= \mathbb{T}^{\lambda_{1,0}}(\omega^0)   \\ \mathbb{T}^{\lambda_{1,1}}&=\mathbb{T}^{\lambda_{1,0}}(\omega^1)\\
\mathbb{T}^{\lambda_{1,2}}&=\mathbb{T}^{\lambda_{1,0}}(\omega^2)\\
\mathbb{T}^{\lambda_{2,0}}&=\mathbb{T}^{\lambda_{2,0}}(\omega^0)\\
\mathbb{T}^{\lambda_{2,1}}&=\mathbb{T}^{\lambda_{2,0}}(\omega^1)\\
\mathbb{T}^{\lambda_{2,2}}&=\mathbb{T}^{\lambda_{2,0}}(\omega^2)\\
\mathbb{T}^{\lambda_{3,0}}&=\mathbb{T}^{\lambda_{3,0}}(\omega^0)\\
\mathbb{T}^{\lambda_{3,1}}&=\mathbb{T}^{\lambda_{3,0}}(\omega^1)\\
\mathbb{T}^{\lambda_{3,2}}&= \mathbb{T}^{\lambda_{3,0}} (\omega^2) 
\end{align*}
and the columns of the following matrix form a basis of eigenvectors of $P(u,\pi_3),$

\begin{equation}\label{eigenvectorstotalxx}
 \left(
\begin{array}{ccccccccc}
1 &\omega & \omega^2 &0 &0      &0        &0  &0      &0\\
0 &0      &0         &1 &\omega & \omega^2&0  &0      &0\\
0 &0      &0         &0 &0      & 0       &1  &\omega & \omega^2\\
\frac{\lambda_{1,0}}{\mbox{\rm{prod}}_{1}(1)} &\frac{\lambda_{1,0}\omega^2}{\mbox{\rm{prod}}_{1}(1)}& \frac{\lambda_{1,0}\omega}{\mbox{\rm{prod}}_{1}(1)} &0 &0      &0        &0  &0      &0\\
0 &0      &0         &\frac{\lambda_{2,0}}{\mbox{\rm{prod}}_{1}(2)} &\frac{\lambda_{2,0}\omega^2}{\mbox{\rm{prod}}_{1}(2)}& \frac{\lambda_{2,0}\omega}{\mbox{\rm{prod}}_{1}(2)} &0  &0      &0\\
0 &0      &0         &0 &0      & 0       &\frac{\lambda_{3,0}}{\mbox{\rm{prod}}_{1}(3)} &\frac{\lambda_{3,0}\omega^2}{\mbox{\rm{prod}}_{1}(3)}& \frac{\lambda_{3,0}\omega}{\mbox{\rm{prod}}_1(3)} \\
\frac{\lambda_{1,0}^2}{\mbox{\rm{prod}}_{2}(1)} &\frac{\lambda_{1,0}^2\omega^3}{\mbox{\rm{prod}}_{2}(1)}& \frac{\lambda_{1,0}^2}{\mbox{\rm{prod}}_{2}(1)} &0 &0      &0        &0  &0      &0\\
0 &0      &0         &\frac{\lambda_{2,0}^2}{\mbox{\rm{prod}}_{2}(2)} &\frac{\lambda_{2,0}^2\omega^3}{\mbox{\rm{prod}}_{2}(2)}& \frac{\lambda_{2,0}^2}{\mbox{\rm{prod}}_{2}(2)} &0  &0      &0\\
0 &0      &0         &0 &0      & 0       &\frac{\lambda_{3,0}^2}{\mbox{\rm{prod}}_{2}(3)} &\frac{\lambda_{3,0}^2\omega^3}{\mbox{\rm{prod}}_{2}(3)}& \frac{\lambda_{3,0}^2}{\mbox{\rm{prod}}_{2}(3)} \\
\end{array}
\right)
\end{equation}

}
\endproof
\end{example}

\noindent \begin{proposition} \label{Prop
10}
The set 
\begin{equation*}\label{TUs-2}
\{\mathbb{T}^{\lambda_{t,\ell}}\,:\, t=1,2,\ldots, s\textit{ and }\ell=0,1,\ldots,k_0-1\}
\end{equation*}
forms a basis of eigenvectors of $C(u,\pi_s)$.
\end{proposition}
\begin{proof}
Consider the matrix $T$ with columns $$\mathbb{T}^{\lambda_{1,0}},\ldots, \mathbb{T}^{\lambda_{1,k_0-1}}, \mathbb{T}^{\lambda_{2,0}}\ldots,\mathbb{T}^{\lambda_{2,k_0-1}},\ldots,\mathbb{T}^{\lambda_{s,0}},\ldots, \mathbb{T}^{\lambda_{s,k_0-1}},
$$
respectively. Then we have:
\begin{align*}
 T^{-1}C(u,\pi_s)T=& \sum_{r=0}^{k}c_r T^{-1}P(u,\pi_s)^rT\\
 &=\sum_{r=0}^{k}c_r (T^{-1}P(u,\pi_s) T)^r\\
 =& \bigoplus\limits_{1\leq t \leq s} \sum_{r=0}^{k}c_r
\mbox{\,\rm{diag}\,}((\lambda_{t,0})^{r},(\lambda_{t,1})^r,\ldots,(\lambda_{t,k_0-1})^r )\\
=&\bigoplus\limits_{1\leq t \leq s }
\mbox{\,\rm{diag}\,}\left(\sum_{r=0}^{k}c_r\lambda_{t,0}^r,
\sum_{r=0}^{k}c_r(\lambda_{t,0}\omega)^r,\ldots,\sum_{r=0}^{k}c_r(\lambda_{t,0}\omega^{(k_0-1)})^r\right).
\end{align*}
\end{proof}

\begin{example}
\rm{
    Let $m=9, s=3,$ and $ C(u,\pi_3)=\sum_{r=0}^{3}c_r P(u,\pi_s)^r$ where     
    $c_r=1-(r-1)\mbox{\rm{i}}$, $r=0,1,2,3.$ and $u= (i,-1,-i,1,i,-1,-i,1,i)$.  Then
    $$
    C(u,\pi_3)=\left(\begin{array}{ccccccccc}
2+\mbox{\rm{i}} &  0  &  0  &  \mbox{\rm{i}}  &  0  &  0  & -1+\mbox{\rm{i}} & 0   &  0  \\
 0   & 3-2\mbox{\rm{i}} &  0  &  0  & -1  &  0  &   0  & 1-\mbox{\rm{i}} &  0\\ 
 0 &  0  &  -3\mbox{\rm{i}}  & 0&0 & -\mbox{\rm{i}} &0 &0 & -1+\mbox{\rm{i}} \\
 1-\mbox{\rm{i}} &  0  &  0  & 2+\mbox{\rm{i}}&0 & 0 &1 &0 & 0 \\
0& -1+\mbox{\rm{i}}&0&0& 3-2\mbox{\rm{i}}&0&0 & i &0\\
0& 0& 1-\mbox{\rm{i}}&0&0&-3\mbox{\rm{i}} & 0&0&- 1\\
-\mbox{\rm{i}} &0&0& 1+\mbox{\rm{i}}& 0&0 & 2+\mbox{\rm{i}}&0&0 \\
0& 1& 0&0 & -1- \mbox{\rm{i}}&0&0 &3-2\mbox{\rm{i}}&0\\
0&0& \mbox{\rm{i}}&0&0 & 1+\mbox{\rm{i}}&0&0&-3\mbox{\rm{i}}
    \end{array}\right)
    $$

Let $T$ be the matrix as in (\ref{eigenvectorstotalxx}), that is:
$$
T=\left(\begin{array}{ccccccccc}
   1 & \omega & \omega^2 &0&0&0&0&0&0 \\
   0 &0&0&  1& \omega & \omega^2 &0&0&0\\
   0&0&0&0&0&0&1 & \omega & \omega^2\\
   -\mbox{\rm{i}}& -\mbox{\rm{i}}\omega^2 & -\mbox{\rm{i}}\omega &0&0&0&0&0&0 \\
    0 &0&0&-\mbox{\rm{i}} & -\mbox{\rm{i}}\omega^2 & -\mbox{\rm{i}}\omega &0&0&0\\
    0&0&0&0&0&0& -\mbox{\rm{i}}& -\mbox{\rm{i}}\omega^2 & -\mbox{\rm{i}}\omega\\
    -\mbox{\rm{i}}&-\mbox{\rm{i}}&-\mbox{\rm{i}}&0&0&0&0&0&0\\
    0&0&0&-\mbox{\rm{i}}&-\mbox{\rm{i}}&-\mbox{\rm{i}}&0&0&0\\
    0&0&0&0&0&0&-\mbox{\rm{i}}&-\mbox{\rm{i}}&-\mbox{\rm{i}}\\
\end{array}\right).
$$
Then,
$$
T^{-1}C(u, \pi_3)T= W_1 \oplus W_2 \oplus W_3, $$
with 
\begin{eqnarray*}
    W_1& =&
\mbox{\,\rm{diag}\,}\left(\sum_{r=0}^{3}
c_r(\lambda_{1,0})^r,\sum_{r=0}^{3}
c_r(\lambda_{1,0}\omega)^r,\sum_{r=0}^{3}c_r(\lambda_{1,0}\omega^{2})^r\right) ,\\
W_2 &=&
\mbox{\,\rm{diag}\,}\left(\sum_{r=0}^{3}c_r(\lambda_{2,0})^r,
\sum_{r=0}^{3}c_r(\lambda_{2,0}\omega)^r,\sum_{r=0}^{3}c_r(\lambda_{2,0}\omega^{2})^r\right),\\
W_3&=&
\mbox{\,\rm{diag}\,}\left(\sum_{r=0}^{3}c_r
(\lambda_{3,0})^r,
\sum_{r=0}^{3}c_r(\lambda_{3,0}\omega)^r,
\sum_{r=0}^{3}c_r(\lambda_{3,0}\omega^{2})^r\right),
\end{eqnarray*}
where $c_0= 1-i, c_1= 1, c_2= 1- \mbox{\rm i}, c_3= 1-2 \mbox{\rm i}$, and  
\begin{eqnarray*}
\lambda_{1,0}^3=\mbox{\rm prod}_3(1)&=&u_1 u_4 u_7 =1 \\
\lambda_{2,0}^3 =\mbox{\rm prod}_3(2)&=& u_2 u_5 u_8= -i \\
\lambda_{3,0}^3= \mbox{\rm prod}_3(3)&=& u_3 u_6 u_9=-1.
\end{eqnarray*} 
}
\end{example}

\noindent \textbf{Acknowledgments}.
We thank the referee for a number of very useful comments and suggestions that improved the final version of the paper.\\
\\
\noindent Enide Andrade was support by the Portuguese Foundation for Science and Technology (FCT-Funda\c c\~ao para a Ci\^encia e a Tecnologia) through CIDMA and projects UIDB/04106/2020 and UIDP/04106/2020. Cristina Manzaneda was partially supported by VRIDT (Vicerrector\'{i}a de Investicaci\'on y Desarrollo Tectnol\'ogico), UCN, RESVRIDT 0752021 project. Dante Carrasco was partially supported by FONDECYT project 1181061, Agencia Nacional de Investigaci\'on y Desarrollo-ANID, Chile and by project 196108 GI/C, Grupo de Investigaci\'on en Sistemas Din\'amicos y Aplicaciones-GISDA, UBB, Chile.

\end{document}